\newtheorem{lemma}{Lemma}[section]
\newtheorem{theorem}[lemma]{Theorem}
\newtheorem{theorem*}{Theorem}
\newtheorem{corollary}[lemma]{Corollary}
\newtheorem{conjecture}[lemma]{Conjecture}
\theoremstyle{definition}
\numberwithin{equation}{section}
\begin{document}
 
 \title{Defining Curvature as a Measure via Gauss-Bonnet on Certain Singular Surfaces}

\author[  ]{Robert S Strichartz}

\affil[  ]{Mathematics Department, Cornell University, Ithaca, NY 14853, str@math.cornell.edu}

\maketitle

\begin{abstract}
 We show how to define curvature as a measure using the Gauss-Bonnet Theorem on a family of singular surfaces obtained by gluing together smooth surfaces along boundary curves. We find an explicit formula for the curvature measure as a sum of three types of measures: absolutely continuous measures, measures supported on singular curves, and discrete measures supported on singular points. We discuss the spectral asymptotics of the Laplacian on these surfaces. 
 
 \end{abstract}

\section{Introduction}

 Curvature on a surface $M$ with Riemannian metric $g$ is traditionally defined as a pointwise function $K_g(x)$ on $M$, or the interior of $M$ if $M$ has a boundary $\partial M$. However, the classical Gauss-Bonnet Theorem says 
 \begin{equation}
 \int_N K_g dA + \int_{\partial N} \kappa ds = 2 \pi \chi (N) 
 \end{equation}
 for any smooth subsurface $N$ of $M$, where $dA$ is the Riemannian area element, $ds$ is the arclength element on $\partial N$, $\kappa$ is the one-dimensional curvature function on $\partial N$, and $\chi(N)$ is the Euler characteristic. We see here that the absolutely continuous signed measure 
 \begin{equation}
 \mu_{K} (N) = \int_N K_g dA 
 \end{equation}
 is the key ingredient. We can always obtain the pointwise curvature $K_g(x)$ from the measure curvature $\mu_{K}$, so we really don't lose any information by identifying curvature as a measure. We call this the measure-centered viewpoint. Notice that the Gauss-Bonnet Theorem also adopts this viewpoint for the curvature of the boundary $\partial N$. 
 
 This is philosophy, not mathematics. The point of this paper is that this philosophy leads to some interesting mathematics. Specifically, we would like to extend the definition of curvature to more general spaces than smooth Riemannian surfaces so that an analog of Gauss-Bonnet continuous to hold. We can already point to a simple example that is well-known. Let $M$ be a convex polyhedron, with vertices $V$. On $M \backslash V$, we have a flat Riemannian metric, so we only see curvature at the vertices. It is natural to think of the curvature of $M$ as a measure supported on $V$, a sum of delta masses multiplied by the angle defect $2 \pi - \sum \alpha_j,$ where $\alpha_j$ are the angles of the faces that come together at $v \in V$. (In the case of a polyhedron that is  not convex, the same formula is valid and yields a signed measure.) And the analog of Gauss-Bonnet is simply 
 \begin{equation} 
 \chi(M) = \sum_{v \in V}( 2 \pi - \sum \alpha_j).
 \end{equation}
 
 In this paper we concentrate on surfaces with $1$-dimensional singularities obtained by gluing together smooth surfaces with smooth boundaries along the boundaries. In the simplest example we take $M$ to be $M_1 \sqcup M_2$ factored by an identification map $\varphi: \partial M_1 \to \partial M_2$ that is one-to-one, onto, and smooth. 
 
 \begin{theorem}
 It is possible to define a curvature measure on $M$ using a version of the Gauss-Bonnet Theorem such that 
 \begin{equation} \label{eq:1.4}
 \mu_{ K }(M) = \mu_{K}(M_1) + \mu_{ K }(M_2) + \nu_1 + \nu_2  
 \end{equation}
 where $\nu_1$ and $\nu_2$ are supported on the identified boundary, with 
 \begin{equation}
 \nu_j = \kappa ds \text{ on } \partial M_j \text{ for } j=1,2. 
 \end{equation}
 \end{theorem}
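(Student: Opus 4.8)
The plan is to define the curvature measure on $M$ directly as a sum of three pieces and then show that this is precisely the measure forced by a Gauss--Bonnet identity on subsurfaces of $M$. Write $\Gamma$ for the identified boundary curve $\partial M_1 \cong \partial M_2$, let $\mu_K(M_j) = K_g\,dA$ denote the absolutely continuous curvature measure on the smooth interior of $M_j$, and let $\kappa_j$ be the geodesic curvature of $\partial M_j$ computed inside $M_j$ with respect to the inward normal, so that $\nu_j = \kappa_j\,ds$ is a measure supported on $\Gamma$. I would \emph{define}
\[
\mu_K = \mu_K(M_1) + \mu_K(M_2) + \nu_1 + \nu_2,
\]
which is exactly \eqref{eq:1.4}, and then verify that this is the unique measure for which a Gauss--Bonnet identity holds on $M$. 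The verification is the whole content.

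To check the identity, take a subsurface $N \subseteq M$ whose boundary $\partial N$ meets $\Gamma$ transversally (the generic case, and enough to pin down a measure) and split it as $N = N_1 \cup N_2$ with $N_j = N \cap M_j$. Each $N_j$ is a smooth subsurface of $M_j$ with piecewise smooth boundary, so the classical Gauss--Bonnet theorem with corners gives
\[
\int_{N_j} K_g\,dA + \int_{\partial N_j}\kappa_g\,ds + \sum_{i}\epsilon_i^{(j)} = 2\pi\chi(N_j),
\]
where the corners occur at the $2k$ crossing points $p_i \in \partial N \cap \Gamma$ (with $k$ the number of arcs of $\Gamma \cap N$) and $\epsilon_i^{(j)} = \pi - \theta_i^{(j)}$ are the exterior angles. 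Since $\partial N_j$ splits into its portion on $\partial N$ and its portion along $\Gamma$, summing over $j=1,2$ assembles the area terms into $\int_N K_g\,dA$, the genuine boundary terms into $\int_{\partial N}\kappa_g\,ds$, and the $\Gamma$-terms into $\int_{\Gamma\cap N}(\kappa_1+\kappa_2)\,ds = (\nu_1+\nu_2)(N)$.

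The remaining bookkeeping — which I expect to be the main obstacle — is reconciling the Euler characteristics and corner angles across the singular curve. Here I would use additivity $\chi(N) = \chi(N_1) + \chi(N_2) - \chi(\Gamma\cap N) = \chi(N_1) + \chi(N_2) - k$, together with the observation that the interior angle of $N$ at a crossing $p_i$, measured in the singular metric of $M$, is the sum $\theta_i^{(1)} + \theta_i^{(2)}$ of the two one-sided sector angles, since each side of $\Gamma$ contributes a half-disk of total angle $\pi$ at a boundary point. A direct computation then shows $\sum_i(\epsilon_i^{(1)} + \epsilon_i^{(2)}) - 2\pi k = \sum_i[\pi - (\theta_i^{(1)}+\theta_i^{(2)})]$, so the summed identity rearranges into
\[
\mu_K(N) + \int_{\partial N}\kappa_g\,ds + \sum_i\bigl[\pi - (\theta_i^{(1)}+\theta_i^{(2)})\bigr] = 2\pi\chi(N),
\]
with $\mu_K(N) = \int_N K_g\,dA + \int_{\Gamma\cap N}(\kappa_1+\kappa_2)\,ds$; the final sum is exactly the total turning of $\partial N$ at its crossings with $\Gamma$ viewed as corners of the singular surface. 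Since the left side is additive over disjoint subsurfaces and recovers $\mu_K$ on every such $N$, this simultaneously defines the curvature measure and yields the decomposition \eqref{eq:1.4}. The delicate points deserving care are the sign conventions for $\kappa_1$ and $\kappa_2$ (the two inward normals point to opposite sides of $\Gamma$, so the contributions \emph{add} rather than cancel — as confirmed by gluing two flat disks of radius $r$ into a sphere, where $\nu_1+\nu_2 = (2/r)\,ds$ integrates to $4\pi = 2\pi\chi$) and the verification that the one-sided angles genuinely sum to the interior angle of $M$ at each crossing.
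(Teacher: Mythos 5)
Your proposal is correct, but it runs the argument in the opposite direction from the paper and on a more general class of domains. The paper picks a special family of test domains: geodesic quadrilaterals straddling the identified curve, built from geodesic segments perpendicular to $\partial M_j$ at two nearby boundary points (so the two perpendiculars concatenate into single geodesics of $M$, and no corners occur on the singular curve at all). It then \emph{defines} $\mu_K(N)$ on such a quadrilateral by the Gauss--Bonnet formula $\mu_K(N)=a_{11}+a_{12}+a_{21}+a_{22}-2\pi$, checks this is well defined (the angle sums $a_{j1}+a_{j2}$ do not depend on the lengths of the perpendiculars), and derives the decomposition \eqref{eq:1.4} by applying classical Gauss--Bonnet to each half $N_j$, where the arc of $\partial M_j$ contributes exactly $\nu_j(A)$ (plus $\mu_K(N_j)$ in the curved case). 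You instead take \eqref{eq:1.4} as the definition and verify the Gauss--Bonnet identity for arbitrary subsurfaces meeting the interface transversally, which forces you to do the corner and Euler-characteristic bookkeeping ($\chi(N)=\chi(N_1)+\chi(N_2)-k$, exterior angles $\pi-(\theta_i^{(1)}+\theta_i^{(2)})$) that the paper's perpendicular construction is designed to avoid; your arithmetic there is right, since the $2k$ crossing points contribute $4\pi k-\sum_i(\theta_i^{(1)}+\theta_i^{(2)})$ against the $2\pi k$ excess in $2\pi(\chi(N_1)+\chi(N_2))$. What your route buys is a one-shot statement of the Gauss--Bonnet identity on essentially all reasonable domains (the paper only gets this afterwards, via the remark that quadrilaterals combined with domains interior to $M_1$, $M_2$ determine everything), plus an explicit treatment of uniqueness; what the paper's route buys is economy, since the only geometric inputs are angle additivity across the interface and invariance of the angle sums. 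Both arguments ultimately rest on the same two facts, which you correctly flag as the delicate points: the geodesic curvatures from the two sides are taken with respect to the two inward normals and therefore add, and angles add across the interface because the total cone angle there is $2\pi$.
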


 We give the proof of the Theorem in section 2, along with some straightforward generalizations. In section 3, we discuss a broader class of examples in which the curvature measure is allowed to have both one-dimensional and discrete parts. (A work in progress \cite{DPS} describes an example of a convex surface whose curvature measure is a fractal measure.) In section 4 we discuss the spectral asymptotics of a natural Laplacian on these surfaces. 
 
 According to the standard philosophy of mathematics, this paper seems to be putting the cart before the horse. After all, shouldn't we try to describe a category of objects that might be called $\textit{singular surfaces}$, and then give a definition of curvature measures on all singular surfaces, proving a version of Gauss-Bonnet? I agree that this would be far better than what I am presenting here. But since I don't know how to do this, it will have to wait for the future. And don't forget Hamlet's famous comment to Horatio!
 
 There are, of course, many other theories of objects that might be called singular surfaces; for example, Alexandrov spaces \cite{Sh}. Some, but not all of our surfaces are Alexandrov spaces, and the kind of curvature associated with Alexandrov spaces is qualitative, not quantitative. 
 
 An entirely different approach to understanding the curvature measure on the identified boundary is to examine the asymptotics of the area of $D_r(x)$ as $r \to 0$ for points $x$ on the boundary, where $D_r(x)$ is the disc of radius $r$ centered at $x$ in $M$. Clearly $D_r(x) = D^1_r(x_1) \cup D^2_r(x_2)$ where $D^j_r(x_j)$ is the disc of radius $r$ centered at $x_j$ in $M_j$, and $x_j$ are the boundary points in $M_j$ identified at $x$. But we know 
 \begin{equation} 
 \text{Area } D^j_r(x_j) = \frac{ \pi }{2} r^2 + \frac{1}{3} \kappa(x_j) r^3 + O(r^4) .
 \end{equation}
 If we agree that the curvature measure along the identified boundary should be $\rho (x)ds$ where $\rho (x)$ satisfies 
 \begin{equation}
  \text{Area }D_r(x) =  \pi r^2 + \frac{1}{3} \kappa(x_j) r^3 + O(r^4) 
\end{equation}
then we obtain (\ref{eq:1.4}). See p. 35 of \cite{H} for an example of this approach.

 \section{ Proof of Theorem}
\begin{proof}
 We assume that $M_j$ are $C^2$ surfaces with $C^2$ boundaries, with Riemannian metrics $g_j$ that are $C^2$, and the gluing identification map $\varphi: \partial M_1 \to \partial M_2$ is also $C^2$. 

To simplify the exposition we first assume that the surfaces $(M_j, g_j)$ are flat. Let $x$ and $y$ denote points on the identified boundaries, so $x_j$ and $y_j$ are the corresponding points on $\partial M_j$, with $x_2 = \varphi(x_1)$ and $y_2 = \varphi (y_1)$. We assume that $x$ and $y$ are sufficiently close that the construction in Figure 2.1 is possible.

\begin{figure}[h!] 
\begin{tikzpicture}

\draw (0,7) .. controls (5,6) and (2,1) .. (5,0.5);
\draw  (0.5,4.6) -- (2.15,6);
\draw  (1,2.3) -- (3.45,2.4);
\draw (0.5,4.6) --(1,2.3);

\draw  (2.15,5.5) -- (2.4,5.7);
\draw (2.15,5.5) -- (1.89,5.75);

\draw  (2.95,2.76) -- (3.4,2.8);
\draw  (2.95,2.76) -- (3,2.4);


\draw (0.55,4.3) arc (-80:35:3.5mm);

\draw (1.3,2.3) arc (0:95:3.5mm);

\draw (0.3,7.3) node 
{ $\partial M_1$ };

\draw (-0.5,5) node 
{ $ M_1$ };
\draw (3.5,4.3) node 
{ $ A_1$ };
\draw (2.2,3.8) node 
{ $ N_{1}$ };

\draw (2.6,6.2) node 
{ $ x_1$ };
\draw (3.9,2.35) node 
{ $ y_1$ };

\draw (1.2,5.7) node 
{ $ L_{11}$ };
\draw (2.2,2) node 
{ $ L_{12}$ };
\draw (0.4,3.5) node 
{ $ L_{1}$ };

\draw (1.2,4.5) node 
{ $ a_{11}$ };
\draw (1.6,2.7) node 
{ $ a_{12}$ };

\draw (9,7) .. controls (8,6) and (10,5)   .. (10,4);
\draw (10,4) .. controls (10,3) and (9,2)   .. (12,1);

\draw  (8.9,5.8) -- (10.7,7.0);
\draw  (9.85,2.6) -- (11.6,2.8);
\draw (10.7,7.0) -- (11.6,2.8);

\draw  (9.1,5.5) -- (9.4, 5.68);
\draw (9.42, 5.68) --(9.22, 5.98);

\draw  (9.85,2.88) --(10.1, 2.93);
\draw (10.1, 2.93) -- (10.15, 2.63);

\draw (10.3,6.75) arc (-155:-58:3.5mm);
\draw (11.55,3.15) arc (90:185:3.5mm);

\draw (12.5,5.5) node 
{ $ M_{2}$ };
\draw (9,7.4) node 
{ $\partial M_2$ };
\draw (10.5,5) node 
{ $ N_{2}$ };

\draw (9.6,6.8) node 
{ $ L_{21}$ };
\draw (11.5,5) node 
{ $ L_{2}$ };
\draw (11,2.3) node 
{ $ L_{22}$ };

\draw (8.6,5.7)  node 
{ $ x_{2}$ };
\draw (9.5,2.7)  node 
{ $ y_{2}$ };

\draw (9.6,4.1)  node 
{ $ A_{2}$ };

\draw (10.45,6.3) node
{ $a_{21}$};
\draw (11.1,3.3) node
{ $a_{22}$};

\end{tikzpicture}

\caption{Figure 2.1}
\label{Figure 2.1}
\end{figure}

Here $L_{11}$ and $L_{12}$ are line segments in $M_1$ perpendicular to $\partial M_1$ at $x_1$ and $y_1$, and similarly $L_{21}$ and $L_{22}$ are line segments perpendicular to $\partial M_2$ at $x_2$ and $y_2$. Then we may interpret the unions $L_{11} \cup L_{21}$ and $L_{12} \cup L_{22}$ as geodesic line segments in $M$. It is not significant how long we take the perpendicular line segments, as long as the line segment $L_1$ connecting the endpoints of the perpendiculars $L_{11}$ and $L_{12}$ stay within $M_1$, and similarly for $L_2$ in $M_2$. We denote the arc joining $x$ and $y$ in the identified boundary by $A$, and let $A_1$ and $A_2$ denote the corresponding arcs in $\partial M_1$ and $\partial M_2$. 

Let $Q$ denote the quadrilateral in $M$ whose sides are the straight line segments $L_1$, $L_{12} \cup L_{22}$, $L_2$ and $L_{11} \cup L_{22}$, enclosing the region $N= N_1 \cup N_2$. For the Gauss-Bonnet Theorem to hold for $N$ in $M$, we need to have 

\begin{equation}
\mu_{K}(N) = a_{11} + a_{12} + a_{21} +a_{22} - 2 \pi . \label{eq:2.1}
\end{equation}

Note that the individual angles depend on the choices of the lengths of the perpendicular segments, but elementary geometry shows that $a_{11} + a_{12}$ and $a_{21}+a_{22}$ are independent of these choices. Thus, we may take  \eqref{eq:2.1} as the definition of $\mu_{K} (N)$. Because $M_1$ and $M_2$ are flat, the support of the measure $\mu_{K}$ must be the identified boundary. But now we can apply Gauss-Bonnet to the regions $N_j$ in $M_j$ to obtain 

\begin{equation} \label{eq:2.2}
\begin{split}
a_{11} + a_{12} - \pi & = \nu_1(A) \\
a_{21}+ a_{22} - \pi & = \nu_2(A) 
\end{split}
\end{equation}

 Adding these and comparing with \eqref{eq:2.1} we obtain \eqref{eq:1.4} in the flat case.
 
 In the case of curved surfaces we again use the construction in Figure $2.1$, where the segments $L_j$ and $L_{jk}$ are geodesics. Then in place of \eqref{eq:2.2} we have 
 \begin{equation} \label{eq:2.3}
\begin{split}
a_{11} + a_{12} - \pi & = \nu_1(A)  + \mu_{K}(N_1)\\
a_{21}+ a_{22} - \pi & = \nu_2(A) + \mu_{K}(N_2)
\end{split}
\end{equation}
  and this again leads to \eqref{eq:1.4}. 
  
\end{proof}
  
We note that, although the definition \eqref{eq:2.1} gives the Gauss-Bonnet Theorem for one type of domain in $M$, if we combine it with the Gauss-Bonnet Theorem on domains in $M_1$ and $M_2$ we obtain the result on all domains in $M$. 

It is straightforward to extend the theorem to the setting where $M$ is obtained from a finite collection of surfaces $ \{ M_j \}_{j \in J}$ glued together along components of the boundaries $\{ \partial M_j \}$. In fact we could allow gluing of more than two boundaries together. The resulting object would no longer be a surface, of course. 

The construction leads to measures on the identified boundaries that are absolutely continuous with respect to arclength measure on the boundaries of $M_j$ (because of the assumptions on the identification map the arclength measures on $\partial M_1$ and $\partial M_2$ are mutually absolutely continuous). We can also identify certain subarcs on the boundary where the measure is positive, if both $\partial M_1$ and $\partial M_2$ are positive on the subarc, and similarly for negativity.

  \section{Examples with point singularities} 
  
  In this section we discuss briefly some examples of surfaces with both line and point singularities, again obtained by gluing. We will simplify the discussion to look at examples with just a single point singularity at the intersection of several line singularities, but of course everything is local so with some more gluing we could handle a finite number of point singularities. 
  
  The set-up is shown in Figure 3.1.
  
\begin{figure}[h!] 
\centering 
\begin{tikzpicture}
\draw (5,7) .. controls (4,6) and (7,5.8)   .. (6,4);
\draw (5,7.5) node { $L_{41}$};

\draw (6,4) .. controls (5,3) and (4,1)   .. (6,1);
\draw  (6,0.5) node { $L_{23}$};

\draw (3,4) .. controls (3.2,5) and (4.1, 5) .. (5, 4);
\draw (5,4) .. controls (5.4,3.6) and (5.7, 4.7) .. (6, 4);
\draw  (2.5,4) node { $L_{34}$};

\draw (6,4)--(6.4, 4);
\draw  (6.4,4) .. controls (6.8,4) and (7.5, 3) .. (8.5, 4);
\draw  (9, 4) node { $L_{12}$};

\draw (6.4,4) arc (0:76: 3.5mm);
\draw  (6.6,4.3) node { $\theta_1$};
\draw (7.6,5.3) node { $M_1$};

\draw (5.4,4) arc (180:60: 5mm);
\draw (5.6,4.7)  node{ $\theta_4$};
\draw (4.7,5.4)  node{ $M_4$};

\draw (5.45,4.03) arc (180:270: 3.5mm);
\draw (5.35,3.65) node { $\theta_3$};
\draw (4.35,3.1) node { $M_3$};

\draw (6.3,4) arc (0:-122: 3.5mm);
\draw (6.3,3.5) node { $\theta_2$};
\draw (6.8,2.5) node { $M_2$};
\end{tikzpicture}

\caption{Figure 3.1}
\label{Figure 3.1}
\end{figure}
  
  The curves $L_{j (j+1)}$ are identified pieces of the boundaries of $M_j$ and $M_{j+1}$. The angles $\theta_j$ are between the boundary pieces in $M_j$ of $L_{(j-1)j}$ and $L_{j(j+1)}$. If we delete a small neighborhood of the intersection point, then the method in section 2 shows that the curvature measure has the absolutely continuous part supported in the interior of $M_j$ and the 1-dimensional parts supported on the identified curves $L_{j(j+1)}$ so the only issue is what happens in the neighborhood of the intersection point. 
  
  To be more specific, suppose we have smooth manifolds $M_1, M_2, \ldots M_n$ with boundaries that are smooth except for one corner singularity $y_j$ with angle $\theta_j$ (in Figure $3.1$ we have $n=4$). Let $\gamma_{j1}$ and $\gamma_{j2}$ denote the portions of $\partial M_j$ on either side of $y_j$. Suppose we are given $C^2$ identification maps $\varphi_j : \gamma_{(j-1)2} \to \gamma_{j1}$ with $\varphi_j(y_{j-1}) = y_j$. Then we let $M$ be the union of the $M_j$ modulo the identifications. Note that all the corner points $y_j$ are identified in $M$ in the single intersection point we call $y$. The curves $L_{(j-1)j}$ in Figure 3.1 are the identified boundary pieces $\gamma_{(j-1)2}$ and $\gamma_{j1}$ (of course we use cyclic notation in the variable $j$). 
  
  In Figure 3.2 we zoom in on one of the surfaces $M_j$ in Figure 3.1 and perform a construction analogous to that in Figure 2.1. We choose points $x_{j1}$ on $\gamma_{j1}$ and $x_{j2}$ on $\gamma_{j2}$, and we require that $x_{(j-1)2}$ is identified with $x_{j1}$.

  \begin{figure}[h!] 
\centering 
\begin{tikzpicture}
\draw (0.5,2) .. controls (2,4.2) and (4,4.8)   .. (6,5);
\draw (1.5,3.75) node { $ \gamma_{j1}$};
\draw (0.5,2) .. controls (2,1) and (4,0.5)   .. (5.6,0.5);
\draw (1.5,1.1) node { $ \gamma_{j2}$};

\draw (0.15,2) node { $y_i$ };
\draw (5,3) node {$M_j$};
\draw (2.2,2.8) node {$N_j$};

\draw ( 3,0.9) -- (3.3, 2);
\draw ( 4,3.1) -- (3.6, 4.5);
\draw (3.3, 2) --( 4,3.1);
\draw (3.5, 2.3) arc (50:260: 3mm);
\draw (2.8, 2.2) node { $b_j$ };
\draw ( 3.95,3.35) arc (100:230: 3mm);
\draw ( 3.5,3.2) node { $a_j$ };

\draw ( 2.68,0.95)-- (2.78, 1.25);
\draw (2.78, 1.25) --(3.08, 1.16);
\draw (3, 0.6) node { $x_{j2} $};

\draw (3.7, 4.2) -- (3.4, 4.1);
\draw (3.3, 4.4) -- (3.4, 4.1);
\draw (3.75, 4.75) node { $x_{j1} $};

\draw (0.85,1.8) arc (-40:60: 3.5mm);
\draw (1.2,2) node { $\theta_j$ };

\end{tikzpicture}

\caption{Figure 3.2}
\label{Figure 3.2}
\end{figure}

We also require that the distances of the points $x_{j1}$ and $x_{j2}$ to $y_j$ are bounded above by $\epsilon$, where $\epsilon$ is a parameter that will eventually approach zero. We take geodesic segments perpendicular to $\gamma_{j1}$ at $x_{j1}$ (and similarly for $\gamma_{j2}$ and $x_{j2}$), and connect them up by another geodesic segment at angles $a_j$ and $b_j$. When then obtain a pentagon in $M_j$ with three geodesic sides and the curved sides that are segments of $\gamma_{j1}$ and $\gamma_{j2}$, and let $N_j$ be the subset of $M_j$ bounded by this pentagon. 

Let $N$ be the union of the $N_j$ with the identification of the $\gamma_{(j-1)2}$ and $\gamma_{j1}$ sides. This will be a neighborhood of $y$ in $M$. We note that perpendicular geodesic segments at $x_{j1}$ in $M_j$ and $x_{(j-1)2}$ in $M_{j-1}$ combine to form a single geodesic segment in $M$. Thus, $N$ is a $2n$-gon in $M$ bounded by geodesic segments with interior angles $\{ a_j \}$ and $\{ b_j \}$. If we are to have Gauss-Bonnet hold on $M$ we need to define

\begin{equation}
\mu_{K}(N) = \sum_{j=1}^n(a_j +b_j) -2(n-1) \pi 
\end{equation}

On the other hand, from Gauss-Bonnet on $N_j$ in $M_j$ we have 

\begin{equation}
\mu_{K}(N_j) + \int_{y_j}^{x_{j1}} \kappa (\gamma_{j1}) ds +  \int_{y_j}^{x_{j2}} \kappa (\gamma_{j2}) ds +a_j+b_j+ \theta_j -2 \pi
\end{equation}

We may compute $\mu_{\kappa}( \{ y \} )$ by taking the limit as $\epsilon \to 0$ of $\mu_{K}(N) - \sum_{j} \mu_{\kappa}(N_j)$. Clearly the limits of the curvature integrals over the shrinking line segments will vanish, so 

\begin{equation}
\begin{split}
\mu_{K}( \{ y \} & = \lim_{ \epsilon \to 0} \sum_{j=1}^n (a_j + b_j) - 2(n-1) \pi - \sum_{j=1}^n (a_j +b_j+ \theta_j - 2 \pi) \\
& = ( \sum_{j=1}^n \theta_j) -2 \pi.
\end{split}   
\end{equation}

\begin{corollary}
In this setting we have 
\begin{equation}
\mu_{K}(M) = \sum_{j=1}^n \mu_{K} (M_j) + \sum_{j=1}^n ( \nu_{j1} + \nu_{j2}) + (( \sum_{j=1}^n \theta_j) -2 \pi) \delta_y
\end{equation}
where $\nu_{j1}$ and $\nu_{j2}$ are the curvature measures along $\gamma_{j1}$ and $\gamma_{j2}$ as in the Theorem. 

\end{corollary}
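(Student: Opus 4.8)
The plan is to read off $\mu_{K}(M)$ by localizing the defining Gauss--Bonnet requirement to three kinds of regions, according to where the singular set of $M$ sits, and then assembling the pieces. The singular set consists of the identified curves $L_{(j-1)j}$ together with the single vertex $y$; off this set $M$ is locally an ordinary smooth surface, and on it we have the two model computations already carried out, namely the Theorem of section 2 and the $2n$-gon computation preceding the statement. As noted after the proof of the Theorem, once Gauss--Bonnet is known on the model domains together with the classical theorem on smooth subdomains, it holds on all domains in $M$, so it suffices to identify the restriction of $\mu_{K}$ to each of the three regions.

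First I would handle the absolutely continuous part. Every point of $M$ lying neither on a curve $L_{(j-1)j}$ nor equal to $y$ has a neighborhood isometric to an open set in some smooth $(M_j,g_j)$, so classical Gauss--Bonnet applies verbatim there; the restriction of $\mu_{K}$ to $M\setminus\bigl(\bigcup_j L_{(j-1)j}\cup\{y\}\bigr)$ is therefore exactly $\sum_j \mu_{K}(M_j)$, the sum of the smooth interior curvature measures. Next, along any arc of $L_{(j-1)j}$ that stays away from $y$, the local geometry is precisely the two-sheeted gluing of section 2: the smooth sheets $\gamma_{(j-1)2}$ and $\gamma_{j1}$ meeting along the identified curve. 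Applying the Theorem on such an arc identifies the one-dimensional part of $\mu_{K}$ there as the sum $\nu_{(j-1)2}+\nu_{j1}$ of the two incident $\kappa\,ds$ measures; ranging over all the curves and all $j$ collects these into $\sum_j(\nu_{j1}+\nu_{j2})$.

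Finally, the discrete part lives only at $y$, and its mass is what the construction of Figure 3.2 computes: taking $N=\bigcup_j N_j$, subtracting the interior contributions $\sum_j \mu_{K}(N_j)$ supplied by Gauss--Bonnet on each pentagon, and letting $\epsilon\to 0$ gives $\mu_{K}(\{y\})=\bigl(\sum_{j=1}^n \theta_j\bigr)-2\pi$. Since $\mu_{K}$ is additive and the three supports --- the smooth locus of $M$, the interiors of the curves, and the point $y$ --- are pairwise disjoint, summing the three contributions yields the asserted formula.

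The main obstacle is justifying the passage to the limit and the mutual singularity, i.e. that nothing is double-counted or lost at $y$. Concretely, one must check that the curvature integrals $\int\kappa\,ds$ over the shrinking boundary segments of $\gamma_{j1}$ and $\gamma_{j2}$ tend to $0$ --- which follows since $\kappa$ is bounded on the $C^2$ boundaries and the arclengths are $O(\epsilon)$ --- and, crucially, that the resulting value of $\mu_{K}(\{y\})$ does not depend on the auxiliary choices in Figure 3.2, namely the points $x_{j1},x_{j2}$ and the lengths of the perpendicular geodesics. This independence is the analogue of the elementary-geometry invariance of $a_{11}+a_{12}$ and $a_{21}+a_{22}$ observed after \eqref{eq:2.1}; once it is in hand, the angle sums $\sum_j(a_j+b_j)$ telescope against the pentagon contributions exactly as in the displayed computation, and the three-part decomposition is forced to be the one asserted.
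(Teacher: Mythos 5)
Your proposal is correct and follows essentially the same route as the paper: away from $y$ the measure is identified by the section 2 Theorem (absolutely continuous part plus the $\kappa\,ds$ measures on the identified curves), and the point mass at $y$ is computed by the $2n$-gon construction of Figure 3.2, applying Gauss--Bonnet on each pentagon $N_j$ and letting $\epsilon \to 0$, with the curvature integrals over the shrinking arcs vanishing. Your added remarks on why those integrals vanish and on the independence of the auxiliary choices are sound refinements of the paper's argument, not a different approach.
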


 \section{Spectrum of the Laplacian} 
 It is possible to define a Laplacian on $M$, via the weak formulation $\Delta u =f$ means $u \in dom \mathcal{E} $, $f \in L^2(M)$ and 
 \begin{equation}
 \mathcal{E} (u,v) = - \int_{M} f v dA \text{ for all } v \in dom \mathcal{E} 
 \end{equation}
 
 Here the area measure $dA$ is just the sum of the area measures on $M_j$ and the energy form is the sum of energy forms on $M_j$ 
 \begin{equation} 
 \mathcal{E} (u,v) = \sum_j \int_{M_j} \nabla u \cdot \nabla v dA 
 \end{equation} 
 However, the domain $dom( \mathcal{E})$ of the energy form requires a more careful explanation. On a surface, the finiteness of the energy $\mathcal{E}(u,u)$ does not imply that $u$ is continuous, but the discontinuities cannot be too pervasive. Thus the finiteness of $\int_{M_j} | \nabla u|^2 dA$ implies that there is a well defined trace of $u$ on $\partial M_j$ that belongs to the Sobolev space $H^{1/2}( \partial M_j)$. So $dom( \mathcal{E} )$ is defined to be the functions with $\mathcal{E} (u,u)$ finite and whose traces are equal on the identified boundaries. By considering test functions $v$ supported in the interior of $M_j$, it follows that $\Delta u =f$ means that the pointwise formula holds in the interior of $M_j$. It is the continuity condition on the identified boundaries that requires the above careful formulation. 
 
 Assume $M$ is compact. Then the Laplacian is a negative definite self-adjoint operator with compact resolvent. The spectrum consists of a discrete sequence
 \begin{equation}
 \lambda_1 \le  \lambda_2 \le   \lambda_3 \le \ldots \to \infty
 \end{equation}
 of eigenvalues of $- \Delta$ with eigenfunctions 
 \begin{equation}
 - \Delta u_j = \lambda_j u_j
 \end{equation}
 with $\{ u_j \}$ giving an orthonormal basis of $L^2(M)$ (note that $\lambda_1 =0$ and $u_1$ is constant). The famous paper of Mark Kac \cite{K} discusses the relationships between the spectrum and the geometry of $M$ in the case of smooth manifolds. It is thus a fundamental problem to examine similar relationships for the surfaces described in this paper. 
 
 We consider here the special case of the ``double" of a smooth manifold $M_1$ with boundary $\partial M_1$. That is, we take $M_2$ isometric to $M_1$ and take essentially the identity map to glue the boundaries. Note that we have a natural symmetry $\sigma$ of $M$ that interchanges the isometric points of $M_1$ and $M_2$, and $\sigma$ is the identity on the identified boundaries. 
 
 \begin{lemma} 
 The spectrum of $M$ is simply the union of the Dirichlet and Neumann spectra of $M_1$. A Dirichlet eigenfunction on $M_1$ yields an eigenfunction on $M$ by odd reflection under $\sigma$. Similarly, a Neumann eigenfunction on $M_1$ yields an eigenfunction on $M$ by even reflection under $\sigma$. And there are no other eigenfunction on $M$.
 \end{lemma}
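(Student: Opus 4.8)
The plan is to exploit the isometric involution $\sigma$ as the organizing principle. Since $\sigma$ is an isometry of $M$ fixing the gluing curve pointwise, it preserves both the area measure $dA$ and the energy form $\mathcal{E}$, hence it maps $dom\,\mathcal{E}$ to itself and commutes with $\Delta$. Consequently $L^2(M)$ splits as an orthogonal direct sum $L^2_+(M)\oplus L^2_-(M)$ into the $+1$ and $-1$ eigenspaces of $\sigma$, i.e. the \emph{even} functions with $\sigma u=u$ and the \emph{odd} functions with $\sigma u=-u$; every $u$ decomposes as $u=\tfrac12(u+\sigma u)+\tfrac12(u-\sigma u)$, so the splitting is complete. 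Because $\Delta$ commutes with $\sigma$, it preserves each summand, and it therefore suffices to diagonalize $\Delta$ separately on the even and odd parts. Restriction to $M_1$ identifies each summand with $L^2(M_1)$ (up to the harmless scalar $\sqrt2$, since $\int_M uv\,dA=2\int_{M_1}u_1v_1\,dA$ for $u,v$ of the same parity), so the whole problem reduces to determining which boundary value problem on $M_1$ each parity encodes.

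First I would read off the boundary conditions from the definition of $dom\,\mathcal{E}$. A function in $dom\,\mathcal{E}$ must have matching traces on the identified boundary; writing $u$ through its restrictions $u_1,u_2$ and using that $\sigma$ is the identity on the boundary, an odd function satisfies $\mathrm{tr}\,u_1=-\mathrm{tr}\,u_1$, forcing $\mathrm{tr}\,u_1=0$, whereas an even function places no constraint on the trace. Thus odd elements of $dom\,\mathcal{E}$ restrict to $H^1_0(M_1)$ and even elements restrict to all of $H^1(M_1)$. Feeding this into the weak equation $\mathcal{E}(u,v)=-\int_M fv\,dA$ and testing against $v$ of the appropriate parity (the cross-parity pairings vanish by $\sigma$-invariance), the factor-of-$2$ bookkeeping converts $-\Delta u=\lambda u$ on $M$ into $\mathcal{E}_{M_1}(u_1,v_1)=\lambda\int_{M_1}u_1v_1\,dA$ for all $v_1\in H^1(M_1)$ in the even case and for all $v_1\in H^1_0(M_1)$ in the odd case. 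These are precisely the weak formulations of the Neumann and Dirichlet eigenvalue problems, so even eigenfunctions on $M$ restrict to Neumann eigenfunctions and odd eigenfunctions restrict to Dirichlet eigenfunctions, with the same eigenvalue.

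For the converse I would run the reflection construction: given a Neumann eigenfunction $\phi$ on $M_1$, define $u$ on $M$ by even reflection, and given a Dirichlet eigenfunction by odd reflection. In each case the two traces agree (both equal $\mathrm{tr}\,\phi$ for even reflection, and both equal $0$ for odd), so $u\in dom\,\mathcal{E}$. To verify that $u$ is a genuine weak eigenfunction one tests against an arbitrary $v\in dom\,\mathcal{E}$, decomposes $v$ into its even and odd parts, discards the part of opposite parity to $u$, and reduces to the Neumann (resp.\ Dirichlet) variational identity already known to hold. Since the even and odd eigenfunctions together span $L^2_+(M)\oplus L^2_-(M)=L^2(M)$ and $\Delta$ respects this splitting, no eigenfunctions are left unaccounted for, giving the final clause of the statement.

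The step I expect to be the main obstacle is the rigorous matching of domains at the gluing curve, i.e. confirming that the even reflection of a Neumann eigenfunction truly lies in $dom\,\mathcal{E}$ and solves the weak equation without a spurious distributional term concentrated on the boundary. The delicate point is that a priori a Neumann eigenfunction is only $H^1$ with interior elliptic regularity, and one must be sure that the vanishing of the normal derivative --- encoded weakly by the Neumann variational identity --- is exactly what prevents the reflected function from acquiring a boundary concentration in $\Delta u$. Working entirely within the energy-form formulation of section 4, rather than with pointwise normal derivatives, is what makes this transparent: the test functions in $dom\,\mathcal{E}$ see the full boundary behavior, so the weak equation on $M$ is equivalent to the two variational identities on $M_1$ with no leftover boundary contribution.
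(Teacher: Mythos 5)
Your proposal is correct and takes essentially the same approach as the paper: decompose $L^2(M)$ into even and odd parts under the involution $\sigma$, match odd eigenfunctions with Dirichlet eigenfunctions on $M_1$ and even ones with Neumann eigenfunctions, and observe that the two parities exhaust the spectrum. The paper's own proof is a brief sketch of exactly this symmetry argument stated in terms of pointwise boundary conditions; your version simply carries out the same idea rigorously in the energy-form (weak) formulation.
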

 
 \begin{proof}
 Since the symmetry $\sigma$ commutes with the Laplacian, we can break up the eigenspaces on $M$ into odd and even functions under the symmetry $\sigma$. The odd functions vanish on $\partial M_1$ so they correspond to Dirichlet eigenfunctions on $M_1$. Similarly, the even functions have vanishing normal derivative on $\partial M_1$, so they correspond to Neumann eigenfunctions on $M_1$.
 \end{proof} 
 
 Consider the eigenvalue counting function $N_{M}(t) = \# \{ \lambda_j \le t \}$ on $M$, and similarly the Dirichlet and Neumann eigenvalue counting functions $N_M(t)$ and $N_N (t)$ on $M_1$. The lemma says $N_M(t) = N_D(t) +N_N(t)$. 
 
 Now the Weyl-Ivrii asymptotics \cite{I} on $M_1$ say that 
 
 \begin{equation}
 \begin{split}
 N_D(t) = & \frac{ \text{Area}(M_1)}{4 \pi} t - \frac{ \text{Length}( \partial M_1)}{4 \pi} t^{1/2} + o(t^{1/2}) \\
  N_N(t) = & \frac{ \text{Area}(M_1)}{4 \pi} t + \frac{ \text{Length}( \partial M_1)}{4 \pi} t^{1/2} + o(t^{1/2}), 
  \end{split}
  \end{equation}
so 
\begin{equation}
N_M(t) = \frac{ \text{Area}(M)}{4 \pi} t+ o(t^{1/2})
\end{equation}

According to the conjectures in \cite{St} and \cite{MS} we should do better in the case that $M_1$ has constant curvature if we average the errors. Let 
\begin{equation}
\tilde{N}(t) =  \frac{ \text{Area}(M)}{4 \pi} t + \mu_{K}(M),
\end{equation}
  and 
 \begin{equation}
 A(t) = \frac{1}{t} \int_0^t (N(s) - \tilde{N}(s)) ds
 \end{equation}
 
 \begin{conjecture} 
 We have the average error estimate
 \begin{equation}
 A(t) = O (t^{-1/4}).
 \end{equation}
 \end{conjecture}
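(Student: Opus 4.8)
The plan is to pass from the counting function to a smoothed spectral trace and to separate the local (diagonal, $\tau\to0$) contribution from the contribution of closed orbits. First I would rewrite the averaged error as a first order Riesz mean,
\[
\frac{1}{t}\int_0^t N(s)\,ds=\sum_{\lambda_j\le t}\Bigl(1-\frac{\lambda_j}{t}\Bigr),
\]
so that $A(t)$ is exactly this Riesz mean minus $\tfrac{\mathrm{Area}(M)}{8\pi}t$ and the constant from $\tilde N$. Using the Lemma, $N_M=N_D+N_N$, I would treat the two problems on $M_1$ simultaneously, or equivalently work directly on the closed singular surface $M$ and represent the Riesz mean through the wave propagator $\operatorname{Tr}\cos\!\bigl(\tau\sqrt{-\Delta}\bigr)$. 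The Fourier--Tauberian dictionary then reduces everything to the structure of the singular support of this trace: a singularity at $\tau=0$ encoding the local geometry, and singularities at $\tau=\ell$ for each length $\ell$ in the length spectrum (closed geodesics on $M$, or closed billiard trajectories in $M_1$ for the Dirichlet and Neumann pieces).

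The $\tau=0$ singularity is treated by the local Weyl, or equivalently heat, calculus and reproduces precisely the two terms of $\tilde N$: its leading part gives the area term, and its subleading part gives a constant equal to the $\tau^{0}$ heat invariant $c_0$. On each smooth piece $c_0=\tfrac{1}{12\pi}\int K\,dA+\tfrac{1}{12\pi}\int\kappa\,ds$, and summing over $M_1$ and $M_2$ these assemble, exactly as in the Theorem and Corollary, into $\tfrac{1}{12\pi}\mu_{K}(M)=\tfrac16\chi(M)$; thus the constant in $\tilde N$ is the genuine spectral invariant, up to the universal factor $\tfrac{1}{12\pi}$ that should be inserted into the definition of $\tilde N$. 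I would use constant curvature here to guarantee that the local invariants are exactly computable and that no further smooth terms survive the averaging, and to fix the geodesic flow on the smooth pieces to be integrable (flat) or Anosov (negatively curved), the two cases in which the orbit sum below is tractable.

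The heart of the matter is the oscillatory contribution of the closed orbits, and this is where the exponent $-\tfrac14$ appears. After the order one Ces\`aro average, each length $\ell$ contributes an oscillatory term in $t$ whose amplitude, by one integration by parts (stationary phase in $\tau$ against the averaging kernel), is smaller by a factor $t^{-1/2}$ than the corresponding contribution to the pointwise remainder; since that remainder is of size $t^{1/4}$ for constant curvature surfaces, one expects $A(t)=O(t^{-1/4})$. The cleanest model is the flat double, where the length spectrum is a lattice and Hardy's expansion for the Gauss circle problem gives, after the substitution $s\mapsto u^2$,
\[
A(t)=\frac{C}{t}\sum_{n\ge1}\frac{r(n)}{n^{5/4}}\,t^{3/4}\sin\!\bigl(c\sqrt{n}\,\sqrt t+\varphi\bigr)+o(t^{-1/4}),
\]
an absolutely convergent sum of size $t^{-1/4}$, showing both that the bound holds and that the exponent is sharp. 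For negative curvature I would instead use the Selberg or Duistermaat--Guillemin trace formula, bounding the orbit sum by weighing the exponential growth of the number of closed geodesics against the gain from averaging and the decay of the symbol.

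The main obstacle is that $M$ is genuinely singular along the glued curve, so neither Poisson summation nor the Selberg trace formula is available off the shelf: I must first establish a trace formula on $M$ itself. Propagation of singularities for the wave equation across the seam produces, besides the geometric geodesics, \emph{diffracted} rays emanating from the singular curve, a Keller--Sommerfeld type phenomenon, and these create extra singularities of the wave trace at the lengths of diffractive closed geodesics. The crux of the proof is to show that these diffractive contributions are strictly weaker than the principal ones and do not spoil the $t^{-1/4}$ rate; equivalently, in the Dirichlet--Neumann reformulation on $M_1$, one must verify that the set of periodic billiard trajectories has measure zero and control the grazing rays. I expect establishing this diffractive trace formula, together with the requisite uniform orbit bounds, to be the decisive difficulty, with the constant curvature hypothesis doing the essential work of taming the geometry of both the principal and the diffracted orbits.
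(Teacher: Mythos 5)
You are attempting to prove a statement that the paper itself presents as a \emph{Conjecture}: the author gives no proof, notes that it is known only in a few exactly solvable cases (rectangle, equilateral triangle, isosceles right triangle, where $N_D(t)$ and $N_N(t)$ can be computed exactly), and emphasizes that it is open even for the flat disc, where the eigenvalues are explicitly known in terms of zeros of Bessel functions and their derivatives. So there is no proof in the paper to compare against; the only question is whether your argument settles the conjecture, and it does not. By your own account the decisive step --- a wave-trace formula on the glued singular surface, with control of the diffracted rays generated at the seam and of grazing billiard trajectories in the Dirichlet--Neumann reformulation --- is not established but merely identified as ``the decisive difficulty.'' A program whose crux is conceded to be unproven is a research plan, not a proof, and the remaining parts of your argument do not reduce the conjecture to that crux in any rigorous way.

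There is also reason to doubt that the crux you identify is the real obstruction. For the flat disc double no trace formula is needed at all: the spectrum is explicit, yet the conjecture remains open. What is missing there is precisely the estimation of the resulting oscillatory sums over the length spectrum (a hard exponential-sum problem, the Bessel-zero analog of the circle problem), and your key quantitative step --- that one integration by parts in the Ces\`aro average gains a uniform factor $t^{-1/2}$ against a pointwise remainder of size $t^{1/4}$ --- assumes exactly the uniformity over the whole length spectrum that nobody knows how to prove outside the lattice cases. Your ``cleanest model,'' the lattice length spectrum with Hardy's expansion, covers only the rectangle-type examples the paper already lists as known; note also that Hardy-type series are only conditionally convergent before averaging, so the absolute convergence of the averaged sum must be derived by justifying the term-by-term averaging, not asserted. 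Finally, you correctly observe that the constant term in the paper's $\tilde N(t)$ is $\mu_K(M)$ rather than the heat-invariant constant $\frac{1}{12\pi}\mu_K(M)=\frac{1}{6}\chi(M)$, but having flagged this you are then proving a modified statement; any genuine attack on the conjecture has to commit to one normalization and show that it, and not the other, is forced by the $\tau=0$ singularity of the trace.
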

 
 There are some simple examples of $M_1$ for which the conjecture is known to be true ( rectangle, equilateral triangle, isosceles right triangle, see \cite{JS} \cite{St}) because we can compute $N_D(t)$ and $N_M(t)$ exactly. On the other hand, if $M$ is a flat disc, the conjecture is open, even though the eigenvalues are given explicitly in terms of zeros of Bessel functions and derivatives of Bessel functions.

 \bibliographystyle{annotate}

 \end{document}